\newtheorem{definition}{Definition}
\newtheorem{proposition}{Proposition}
\newtheorem{hypothesis}{Conjecture}
\newtheorem{theor}{Theorem}
\theoremstyle{definition}
\begin{document}
\author{A.\,A.~Gornitskii}

\title {Essential Signatures and Canonical Bases for Irreducible Representations of $D_4$.}
\date{}

\maketitle

\thispagestyle{empty}
\begin{abstract}
In the representation theory of simple Lie algebras, we consider the problem of constructing a ``canonical'' weight basis in an arbitrary irreducible finite-dimensional highest weight module. Vinberg suggested a method for constructing such bases by applying the lowering operators corresponding to all positive roots to the highest weight vector. He proposed several conjectures on the parametrization and structure of such bases. It is already known that these conjectures are true for the cases $A_n$, $C_n$, $G_2$, $B_3$. In this paper, we prove these conjectures for the Lie algebra of type $D_4$.
\end{abstract}

\section {Introduction}

Let $\mathfrak{g}$ be a simple Lie algebra.  One has the triangular decomposition
$\mathfrak{g}=\mathfrak{u}^{-}$$\oplus$$\mathfrak{t}$$\oplus$$\mathfrak{u}$, where
$\mathfrak{u}^{-}$ and $\mathfrak{u}$ are mutually opposite maximal unipotent subalgebras, and $\mathfrak{t}$ is a Cartan subalgebra.

One has: $\mathfrak{u}= \langle e_{\alpha}$ $\mid$ $\alpha$ $\in$ $\Delta_{+}\rangle$,
$\mathfrak{u^{-}}=\langle e_{-\alpha}$ $\mid$ $\alpha$ $\in$ $\Delta_{+}\rangle$, where
$\Delta_{+}$ is the system of positive roots, $e_{\pm\alpha}$
 are the root vectors, and the symbol $\langle\ldots\rangle$ stands for the linear span.

 We denote a finite-dimensional irreducible $\mathfrak{g}$-module with highest weight $\lambda$ by $V(\lambda)$ and a highest weight vector in this module by $v_{\lambda}$.

 Various approaches to construction of canonical bases in $V(\lambda)$ are known: Gelfand--Tsetlin bases, crystal bases, etc. For instance, one constructs the crystal basis by applying the lowering operators corresponding to simple roots to the highest weight vector in a certain order; see \cite{[L]}. Vinberg's method \cite{[V]} resembles the method for constructing crystal bases; the difference is that the lowering operators corresponding to \emph{all positive roots} are applied to the highest weight vector. The basic concept used in this method is defined as follows.

\begin{definition}
 A signature is an $(N+1)$-tuple
 $\sigma=(\lambda,p_{{1}},\dots,p_{{N}})$,
  where $N$ is the number of positive roots numbered in a certain fixed order: $\Delta_{+}=\{\alpha_{1},\dots,\alpha_{N}\}$, $\lambda$ is a dominant weight, and $p_{i}\in\mathbb{Z}_{+}$.
\end{definition}

Set
$$
{v}(\sigma)=e_{-\alpha_{1}}^{p_{{1}}}\cdot \ldots \cdot
 e_{-\alpha_{N}}^{p_{{N}}}\cdot{v}_{\lambda}.
$$
$\newline$
$\lambda$ is called the highest weight of $\sigma$ and the weight $\lambda-\sum p_i\alpha_i$ of the vector $v(\sigma)$ is called the weight of $\sigma$. Thus we have defined a vector in $V(\lambda)$ for every signature with highest weight $\lambda$. The vectors ${v}(\sigma)$ generate $V(\lambda)$, but they are linearly dependent. Our goal is to select a basis of $V(\lambda)$ from the set of all vectors ${v}(\sigma)$.

 This problem was solved in \cite{[FFL1]}, \cite{[FFL2]}, \cite{[G]}, \cite{[T]} for the algebras of types $A_n$, $C_{n}$, $G_2$ and $B_3$. Let $\mathfrak{t}_{\mathbb{Z}}\subset\mathfrak{t}$ be the coroot lattice, i.e., the lattice of vectors on which all weights take integer values. The signatures corresponding to the basis vectors are determined by a set of linear inequalities of the form

  \begin{equation}
  \label{7}
  \sum_{j\in M_{i}}a_{ij}p_{j}\leq \lambda(l_{i}),
  \end{equation}
 where $M_{i}\subset
  \{1,\dots,N\}$ are certain subsets, $l_{i}\in \mathfrak{t}_{\mathbb{Z}}$, $a_{ij}=1$ or 2 in type $B_3$, and $a_{ij}=1$ otherwise.

  In this paper, we solve the above problem for the Lie algebra of type $D_4$. The basis vectors for $D_4$ are determined by a set of linear inequalities of the form (\ref{7}),
  where $a_{ij}=1$, 2 or 3 (Proposition \ref{8}).

Having announced the result for $D_4$, we explain our approach to solving the problem in general. To this end, we need to equip the set of signatures with an order. Consider an arbitrary numbering of positive roots. Let us introduce an order on signatures with highest weight $\lambda$. Let $\sigma=(\lambda,p_1,\ldots,p_N)$ and $\sigma'=(\lambda,p_1',\ldots,p_N').$
 Set
 $$
  q_{i}=\sum_{j=1}^{N-i+1}p_{j},
 $$
 $$
  q_{i}'=\sum_{j=1}^{N-i+1}p_{j}'.
 $$
  Then $\sigma<\sigma'$ if and only if $(q_{1},\ldots,q_{N})<(q_{1}',\ldots,q_{N}')$ in the lexicographic order.

 \begin{definition} A signature $\sigma$ is essential, if
 $v(\sigma)\notin\langle v(\tau)\mid\tau<\sigma\rangle$.

\end{definition}
The following fact is obvious.
\begin{proposition}
The set $\{{v}(\sigma)\mid\sigma \mbox{ essential}\}$ is a basis of $V(\lambda)$.
\end{proposition}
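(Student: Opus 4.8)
The plan is to establish the two defining properties of a basis—\emph{spanning} and \emph{linear independence}—separately, exploiting the weight decomposition $V(\lambda)=\bigoplus_{\mu}V(\lambda)_{\mu}$ to reduce everything to a single weight space at a time. Each $v(\sigma)$ lies in the weight space indexed by the weight of $\sigma$, and vectors of distinct weights are automatically independent, so it suffices to work with a fixed weight $\mu=\lambda-\sum p_i\alpha_i$. The first thing I would record is that, for fixed $\mu$, there are only finitely many signatures of weight $\mu$: the equation $\sum p_i\alpha_i=\lambda-\mu$ pins down an element of the root lattice, and a fixed element admits only finitely many expressions as a nonnegative integer combination of the positive roots. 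Since the assignment $(p_1,\dots,p_N)\mapsto(q_1,\dots,q_N)$ is a bijection, the relation $<$ is a total order on the signatures of highest weight $\lambda$, and by the finiteness just noted it restricts to a finite total order on the signatures of weight $\mu$. Hence every nonempty set of such signatures has a largest element, and induction along $<$ is legitimate.

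For spanning I would argue by induction on $<$ inside $V(\lambda)_{\mu}$ that every $v(\sigma)$ lies in $\langle v(\tau)\mid\tau\text{ essential}\rangle$. If $\sigma$ is itself essential there is nothing to prove. Otherwise, by the definition of essentiality, $v(\sigma)\in\langle v(\tau)\mid\tau<\sigma\rangle$, and each such $\tau<\sigma$ is, by the inductive hypothesis, a linear combination of vectors attached to essential signatures; therefore so is $v(\sigma)$. Because the full family $\{v(\sigma)\}$ already generates $V(\lambda)$, it follows that the essential vectors span $V(\lambda)$.

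For linear independence, suppose a nontrivial relation $\sum_{\sigma}c_{\sigma}v(\sigma)=0$ holds among vectors attached to essential signatures. Projecting onto weight spaces, we may assume all $\sigma$ occurring share a common weight $\mu$. Let $\sigma_0$ be the largest signature (with respect to $<$) with $c_{\sigma_0}\neq 0$, which exists by the finiteness established above. Solving the relation for $v(\sigma_0)$ exhibits it as a linear combination of the $v(\tau)$ with $\tau<\sigma_0$, that is $v(\sigma_0)\in\langle v(\tau)\mid\tau<\sigma_0\rangle$, contradicting the essentiality of $\sigma_0$. Hence no nontrivial relation exists. One minor point to dispose of along the way: if $v(\sigma)=0$ then $\sigma$ is not essential, since $0$ belongs to every subspace; thus every essential signature yields a genuinely nonzero vector and no degeneracy arises.

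Since the statement is a formal consequence of the definitions, there is no real analytic or combinatorial difficulty here; this is why the author calls it obvious. The only point demanding care is the finiteness of the set of signatures of a given weight, which is what guarantees both the existence of a maximal nonzero term in the independence argument and the well-foundedness needed for the inductive spanning argument. Everything else is the standard ``triangularity'' principle: a generating family in which each element either is a chosen leading vector or lies in the span of strictly smaller vectors collapses to a basis.
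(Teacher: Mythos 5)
Your proof is correct and is exactly the standard triangularity argument the paper has in mind when it declares this proposition ``obvious'' (the paper itself supplies no proof). The one point genuinely worth making explicit --- the finiteness of the set of signatures of a fixed weight, which makes the induction and the choice of a maximal term legitimate --- you handle correctly.
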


 The essential signatures with given highest weight $\lambda$ parametrize the desired canonical basis of $V(\lambda)$. The following proposition was proved by Vinberg. For convenience of
the reader, we provide a proof in Section \ref{2}:

 \begin{proposition}
 The essential\label{1} signatures form a semigroup in $\mathfrak{t}^{*}_{\mathbb{Z}}\oplus\mathbb{Z}^{N}$.

\end{proposition}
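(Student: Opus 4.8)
The plan is to restate the semigroup property concretely: writing $\sigma=(\lambda,p)$ and $\sigma'=(\mu,r)$ with $p,r\in\mathbb{Z}_{+}^{N}$, addition in $\mathfrak{t}^{*}_{\mathbb{Z}}\oplus\mathbb{Z}^{N}$ is $\sigma+\sigma'=(\lambda+\mu,p+r)$, and I must show that if $\sigma$ and $\sigma'$ are essential then so is $\sigma+\sigma'$. Throughout I write $E_{t}=e_{-\alpha_{1}}^{t_{1}}\cdots e_{-\alpha_{N}}^{t_{N}}$, so that $v(\lambda,t)=E_{t}v_{\lambda}$. Two preliminary facts drive everything. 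First, the order $\prec$ depends only on the exponent vectors through $q_{i}=\sum_{j\le N-i+1}p_{j}$, and since $q_{i}(p)+q_{i}(r)=q_{i}(p+r)$ and lexicographic comparison is a translation-invariant total order on $\mathbb{Z}^{N}$, the order is \emph{additive and strict}: $a\preceq a'$ and $b\preceq b'$ give $a+b\preceq a'+b'$, with equality only if $a=a'$ and $b=b'$. Second, by downward induction on $\prec$ over the finite set of signatures of a fixed weight, every $v(\lambda,a)$ expands in the essential basis as a combination of $v(\tau)$ over essential $\tau\preceq a$, where the coefficient of $v(\lambda,a)$ itself is $1$ when $a$ is essential and otherwise every $\tau$ occurring satisfies $\tau\prec a$.

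The algebraic input is Cartan multiplication. Let $m\colon V(\lambda)\otimes V(\mu)\to V(\lambda+\mu)$ be the $\mathfrak{g}$-equivariant map with $m(v_{\lambda}\otimes v_{\mu})=v_{\lambda+\mu}$. Since $v_{\lambda}\otimes v_{\mu}$ is annihilated by $\mathfrak{u}$ and has weight $\lambda+\mu$, the submodule $C=U(\mathfrak{u}^{-})(v_{\lambda}\otimes v_{\mu})$ is the irreducible Cartan component of highest weight $\lambda+\mu$, and $m$ restricts to an isomorphism $C\cong V(\lambda+\mu)$. Because $m$ intertwines the action of $E_{t}$, which acts on the tensor product through the coproduct $\Delta$, we get $v(\lambda+\mu,t)=E_{t}v_{\lambda+\mu}=m\bigl(\Delta(E_{t})(v_{\lambda}\otimes v_{\mu})\bigr)$, so under $m^{-1}$ the vector $v(\lambda+\mu,t)$ is identified with
\[
\Delta(E_{t})(v_{\lambda}\otimes v_{\mu})=\sum_{a+b=t}\binom{t}{a}\,v(\lambda,a)\otimes v(\mu,b)\in C\subseteq V(\lambda)\otimes V(\mu),
\]
where $\binom{t}{a}=\prod_{i}\binom{t_{i}}{a_{i}}$. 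The point of passing to the tensor product is that it carries the basis $\{v(\tau)\otimes v(\tau')\}$ indexed by pairs of essential signatures, which I group by the value $s=\tau+\tau'\in\mathbb{Z}_{+}^{N}$ into subspaces $W_{s}$, yielding a decomposition $V(\lambda)\otimes V(\mu)=\bigoplus_{s}W_{s}$.

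Now I combine the two ingredients. Substituting the essential-basis expansions of $v(\lambda,a)$ and $v(\mu,b)$ and invoking additivity and strictness of $\prec$, every basis vector $v(\tau)\otimes v(\tau')$ occurring in $\Delta(E_{t})(v_{\lambda}\otimes v_{\mu})$ satisfies $\tau+\tau'\preceq t$, and its $W_{t}$-part equals exactly $\sum_{a+b=t,\ a,b\ \mathrm{essential}}\binom{t}{a}\,v(a)\otimes v(b)$; in particular $v(\lambda+\mu,t)$ lies in $\bigoplus_{s\preceq t}W_{s}$. Taking $t=p+r$ and using that $p$ is essential for $\lambda$ and $r$ is essential for $\mu$, the splitting $(a,b)=(p,r)$ contributes the term $\binom{p+r}{p}v(p)\otimes v(r)$; since all multinomial coefficients are positive and the various $v(a)\otimes v(b)$ are distinct basis vectors, no cancellation occurs and the $W_{p+r}$-component of $v(\lambda+\mu,p+r)$ is nonzero. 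If $\sigma+\sigma'$ were not essential, then $v(\lambda+\mu,p+r)=\sum_{t\prec p+r}c_{t}\,v(\lambda+\mu,t)$; transporting this relation into $C$ via $m^{-1}$, the right-hand side lies in $\bigoplus_{s\prec p+r}W_{s}$ and so has zero $W_{p+r}$-component, a contradiction. Hence $\sigma+\sigma'$ is essential. I expect the one genuinely delicate step to be this no-cancellation claim in the top graded piece $W_{t}$: it rests precisely on the strictness of the additive order $\prec$ (so that only essential--essential splittings reach the top degree) together with the positivity of the coefficients $\binom{t}{a}$, and on verifying that $m$ is indeed an isomorphism onto $C$ so that linear relations transfer faithfully between $V(\lambda+\mu)$ and $C$.
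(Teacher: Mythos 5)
Your proof is correct, and it reaches the semigroup property by a genuinely different (though dual) route from the paper's. The paper first realizes $V(\lambda)^{*}$ as the space $\mathbb{C}[G]_{\lambda}^{(B)}$ of $B$-eigenfunctions on $G$, restricts such functions to $U^{-}$ in the exponential coordinates $t_{1},\dots,t_{N}$, and characterizes the essential signatures as exactly those whose monomial $\prod t_{i}^{p_{i}}$ occurs as the \emph{least term} of $f_{\omega}|_{U^{-}}$ for some functional $\omega$; the semigroup property is then the one-line observation that the least term of a product of two such series is the product of their least terms, since multiplication of eigenfunctions lands in $\mathbb{C}[G]_{\lambda+\mu}^{(B)}\cong V(\lambda+\mu)^{*}$. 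Your argument is the module-theoretic mirror of this: the Cartan component $C\subset V(\lambda)\otimes V(\mu)$ plays the role of the product of eigenfunction spaces, the coproduct expansion of $E_{t}$ replaces the multiplication of power series, and your no-cancellation claim in the top graded piece $W_{p+r}$ is precisely the paper's assertion that the product of the two least terms survives as the least term of the product. Both proofs hinge on the same order-theoretic fact (additivity and strictness of the lexicographic order on the partial sums $q_{i}$) together with positivity in the Cartan product; yours is self-contained at the level of $U(\mathfrak{g})$-modules and avoids the coordinate ring of $G/U$ entirely, at the cost of having to set up the decomposition $\bigoplus_{s}W_{s}$ and the triangularity of the expansion of $v(\lambda,a)$ in the essential basis — a step the paper gets for free by choosing $\omega$ to vanish on all essential vectors except the one of interest. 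The two delicate points you flag (strictness of the additive order, and $m|_{C}$ being an isomorphism, which follows from complete reducibility and Schur's lemma) are both sound as you state them.
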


Now we proceed to the first conjecture of Vinberg about the structure of the set of essential signatures.
\begin{hypothesis} The semigroup of essential signatures is generated by the essential signatures of fundamental highest weights.
 \end{hypothesis}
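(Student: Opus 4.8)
The plan is to reduce the conjecture (in type $D_4$) to a purely combinatorial statement about lattice points and then to establish an integer decomposition property. By Proposition~\ref{8}, the essential signatures with a fixed highest weight $\lambda$ are precisely the integer points $(p_1,\dots,p_N)$ of the polytope
$$
P_\lambda = \Bigl\{\, p \in \mathbb{R}_{\geq 0}^N : \sum_{j\in M_i} a_{ij}\,p_j \leq \lambda(l_i) \ \text{for all } i \,\Bigr\},
$$
where the data $(M_i, a_{ij}, l_i)$ with $a_{ij}\in\{1,2,3\}$ do not depend on $\lambda$, while the right-hand sides $\lambda(l_i)$ are linear in $\lambda$. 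Writing $\lambda=\sum_k m_k\omega_k$ in the fundamental weights gives $\lambda(l_i)=\sum_k m_k\,\omega_k(l_i)$, so adding the defining inequalities yields $P_\lambda+P_\mu\subseteq P_{\lambda+\mu}$ at once.

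One inclusion of the conjecture is then free: Proposition~\ref{1} shows that any sum of essential signatures is essential, so the subsemigroup generated by the fundamental essential signatures sits inside the full semigroup. What remains is the reverse inclusion, namely that every essential $(\lambda,p)$ with $\lambda=\sum_k m_k\omega_k$ is a sum of $m_k$ essential signatures of highest weight $\omega_k$ for each $k$. Because the fundamental weights are linearly independent, the multiplicities $m_k$ are forced by the weight component, so this is exactly the integer decomposition property
$$
P_\lambda\cap\mathbb{Z}^N=\sum_k\underbrace{\bigl(P_{\omega_k}\cap\mathbb{Z}^N\bigr)+\dots+\bigl(P_{\omega_k}\cap\mathbb{Z}^N\bigr)}_{m_k}.
$$

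I would prove this by induction on $\sum_k m_k$, peeling off one fundamental signature at a time: given essential $(\lambda,p)$ with $\lambda=\mu+\omega_k$, it suffices to produce $p''\in P_{\omega_k}\cap\mathbb{Z}^N$ with $p-p''\in P_\mu\cap\mathbb{Z}^N$. Concretely, I would first list, using Proposition~\ref{8}, the finitely many essential signatures of each fundamental highest weight $\omega_1,\dots,\omega_4$, then build the summand $p''$ by a greedy rule that saturates the tight inequalities of $p$ as far as the fundamental bounds $\omega_k(l_i)$ permit, and finally verify that the complement $p-p''$ is nonnegative and satisfies the inequalities for $\mu$.

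The hard part will be the inequalities with coefficient $a_{ij}=3$, which are new to type $D_4$ and do not occur in the previously settled cases $A_n$, $C_n$, $G_2$, $B_3$. When such a constraint is tight simultaneously with neighbouring constraints of coefficient $1$ or $2$, a naive split can violate feasibility: for instance, if $3p_j\le\mu(l_i)+\omega_k(l_i)$ with $\omega_k(l_i)$ small, then forcing $p''_j$ small may leave $3p_j>\mu(l_i)$ even though $3p_j$ is a multiple of $3$, so the remainder $p-p''$ overshoots its bound. I expect to resolve this by organizing a case analysis according to which families of inequalities are active, matching the ``remainder modulo the coefficient'' pattern of $p$ along the coefficient-$3$ constraints by choosing $p''$ (and, when there is freedom, the order in which the fundamentals are peeled off), and possibly exploiting the triality symmetry of $D_4$ to cut down the number of cases. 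Proving that a consistent admissible choice always exists is the crux of the argument.
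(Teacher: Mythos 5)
Your reduction has a circularity at the very first step: you assert that, by Proposition~\ref{8}, the essential signatures of highest weight $\lambda$ are precisely the lattice points of $P_\lambda$. But Proposition~\ref{8} only describes the cone $\Sigma^{f}_{\mathbb{Q}}$ spanned by the essential signatures of \emph{fundamental} highest weights; the identification of its lattice points at level $\lambda$ with the essential signatures of highest weight $\lambda$ is exactly what Conjecture 1 (together with Conjectures 2 and 3) amounts to, and it is the main thing to be proved. Once that identification is assumed, the only remaining content is the integer decomposition property, i.e.\ Theorem~\ref{t1}, and your plan for it (induction on $\sum_k m_k$, peeling off one fundamental essential signature at a time, with a case analysis governed by which inequalities are tight) is essentially the paper's proof of Theorem~\ref{t1}. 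What is missing is the other half of the argument: a proof that \emph{every} essential signature satisfies the inequalities of Proposition~\ref{8}. The decomposition property by itself only yields the containment $\Sigma^{f}(\lambda)\subseteq\Sigma(\lambda)$, because each lattice point of $P_\lambda$ decomposes into fundamental essential signatures whose sum is essential by Proposition~\ref{1}; it says nothing about essential signatures that might lie outside the cone.

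The paper closes this gap by a counting argument, Theorem~\ref{t2}: $|\Sigma^{f}(\lambda)|$ is given by a polynomial in $(k_1,\dots,k_4)$ of degree at most $12$, the Weyl dimension formula gives $\dim V(\lambda)$ as another such polynomial, and the two are checked to agree on enough integer points to coincide identically. Since the essential signatures of highest weight $\lambda$ number exactly $\dim V(\lambda)$ and contain $\Sigma^{f}(\lambda)$, the equality of cardinalities forces $\Sigma(\lambda)=\Sigma^{f}(\lambda)$, which is Conjecture 1. Without Theorem~\ref{t2} or some substitute (for instance, a direct argument that the straightening relations never produce a signature violating the inequalities), your proposal does not establish the reverse inclusion, so it is incomplete even granting a successful treatment of the coefficient-$3$ inequalities that you correctly identify as the delicate point of the decomposition step.
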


Let us formulate other conjectures of Vinberg. Let
 $\Sigma \subset \mathfrak{t}_{\mathbb{Z}}^{*}\oplus \mathbb{Z}^{N}$ be the semigroup of essential signatures
   and let $\Sigma_{\mathbb{Q}}$ be the rational cone spanned by $\Sigma$. Then this cone can be defined by linear inequalities. (The number of these inequalities is finite if Conjecture 1 holds.)
\begin{hypothesis}
 The semigroup $\Sigma$ is saturated, i.e.,
 $\Sigma=\Sigma_{\mathbb{Q}}\bigcap (\mathfrak{t}_{\mathbb{Z}}^{*}\oplus
 \mathbb{Z}^{N})$.
\end{hypothesis}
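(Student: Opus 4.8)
The plan is to derive the saturation of $\Sigma$ directly from the explicit description of the essential signatures given in Proposition~\ref{8}, thereby reducing the statement to a standard fact about rational polyhedral cones. Recall that for $D_4$ Proposition~\ref{8} characterizes the essential signatures as exactly those tuples $(\lambda,p_1,\dots,p_N)\in\mathfrak{t}^{*}_{\mathbb{Z}}\oplus\mathbb{Z}^{N}$ for which $\lambda$ is dominant, $p_i\geq 0$, and the inequalities (\ref{7}) hold. All of these constraints --- dominance of $\lambda$, positivity of the $p_i$, and (\ref{7}), whose right-hand sides $\lambda(l_i)$ are linear in $\lambda$ --- are homogeneous linear inequalities in the coordinates $(\lambda,p_1,\dots,p_N)$. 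Hence, writing $L=\mathfrak{t}^{*}_{\mathbb{Z}}\oplus\mathbb{Z}^{N}$, the set
$$
C=\{x\in L\otimes\mathbb{Q}\mid x\text{ is dominant},\ p_i\geq 0,\ \text{and }(\ref{7})\text{ holds}\}
$$
is a rational polyhedral cone, and Proposition~\ref{8} says precisely that $\Sigma=C\cap L$.

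Next I would identify $C$ with $\Sigma_{\mathbb{Q}}$. Since every element of $\Sigma$ satisfies the defining inequalities of $C$ and $C$ is a convex cone, one gets $\Sigma_{\mathbb{Q}}\subseteq C$ immediately. For the reverse inclusion I would invoke the standard fact that a rational polyhedral cone is spanned by its lattice points: because the defining inequalities have rational coefficients, each extreme ray of $C$ is rational and therefore contains a nonzero lattice point $v\in C\cap L=\Sigma$. As $C$ is the cone on its extreme rays, this yields $C\subseteq\mathrm{cone}(\Sigma)=\Sigma_{\mathbb{Q}}$, and hence $C=\Sigma_{\mathbb{Q}}$. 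Combining this equality with Proposition~\ref{8} gives
$$
\Sigma_{\mathbb{Q}}\cap L=C\cap L=\Sigma,
$$
which is exactly the asserted saturation.

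The genuine content of the argument is thus concentrated entirely in Proposition~\ref{8}: once the essential signatures are known to be \emph{all} the lattice points of the cone cut out by the inequalities (\ref{7}), saturation is a formal consequence, and the only real obstacle lies in establishing that explicit inequality description in the first place. I would stress one point that might otherwise look like a difficulty. The coefficients $a_{ij}\in\{1,2,3\}$ appearing in (\ref{7}) do \emph{not} interfere with saturation, because $\Sigma$ is defined as the full set of integer points in $C$ rather than as the set of nonnegative integer combinations of a chosen generating set; the presence of coefficients $2$ and $3$ is decisive for Conjecture~1 (the description of generators), but is invisible to the lattice-point count inside the cone. The only mild verifications that remain are that the system (\ref{7}) together with the ambient constraints is finite --- so that $C$ is genuinely polyhedral and the Weyl--Minkowski equivalence of its $H$- and $V$-descriptions applies --- and that $C$ is pointed; both follow from the explicit form of the inequalities in Proposition~\ref{8}, the latter because $p_i\geq 0$ and dominance already confine $C$ to a proper subcone.
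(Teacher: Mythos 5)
The convex--geometric half of your argument is fine: \emph{if} it were known that $\Sigma$ coincides with the set of lattice points of a rational polyhedral cone $C$, then indeed $\Sigma_{\mathbb{Q}}=C$ and saturation would follow formally, exactly as you say. The gap is that the statement you feed into this reduction is not what Proposition~\ref{8} says. Proposition~\ref{8} describes the cone $\Sigma^{f}_{\mathbb{Q}}$, namely the rational cone spanned by the semigroup $\Sigma^{f}$ generated by the essential signatures of the \emph{fundamental} highest weights (a finite list computed by hand at the start of Section~\ref{Sec3}); it asserts nothing about $\Sigma$ itself. A priori neither inclusion relating $\Sigma$ to the lattice points of $\Sigma^{f}_{\mathbb{Q}}$ is available: there could be lattice points of the cone that are not essential, and there could be essential signatures of non-fundamental highest weights lying outside the cone (this would contradict Conjecture~1, but Conjecture~1 is also being proved here, not assumed). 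So the claim you describe as ``the only real obstacle'' and then treat as already established is precisely the main theorem of the paper; as written, your proof assumes what is to be proved.

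What the paper actually does to close this gap is the combination of Theorem~\ref{t1}, Theorem~\ref{t2} and Proposition~\ref{1}. Theorem~\ref{t1} (proved by the long case analysis) gives $\Sigma^{f}(\lambda+\mu)=\Sigma^{f}(\lambda)+\Sigma^{f}(\mu)$, so by induction every lattice point of the cone with highest weight $\lambda$ is a sum of essential signatures of fundamental highest weights, hence is itself essential by the semigroup property of Proposition~\ref{1}; this shows that $\Sigma^{f}(\lambda)$ is contained in the set of essential signatures of highest weight $\lambda$. Theorem~\ref{t2}, the count $|\Sigma^{f}(\lambda)|=\dim V(\lambda)$, together with the fact that the essential signatures of highest weight $\lambda$ also number $\dim V(\lambda)$ (they index a basis of $V(\lambda)$), upgrades this inclusion to an equality for every $\lambda$, i.e.\ $\Sigma=\Sigma^{f}=\Sigma^{f}_{\mathbb{Q}}\cap(\mathfrak{t}_{\mathbb{Z}}^{*}\oplus\mathbb{Z}^{N})$. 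Only at that point does your formal cone argument (which is essentially how the paper, too, deduces Conjecture~2 once the equality is in hand) become applicable. To repair your proposal you would need to either reproduce the content of Theorems~\ref{t1} and~\ref{t2}, or supply an independent argument identifying $\Sigma$ with the lattice points of an explicitly given rational cone; Proposition~\ref{8} alone does not yield this.
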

Conjecture 2 claims that the bases of $V(\lambda)$ are parametrized by lattice points of plane sections of some polyhedral cone.
\begin{hypothesis}
 There exist a family of subsets $M_{i}\subset
  \{1,\dots,N\}$ and a family of elements $l_{i}\in \mathfrak{t}_{\mathbb{Z}}$ such that the set of essential signatures $\sigma=(\lambda,p_{1},\dots,p_{N})$ of highest weight $\lambda$ is given by the inequalities

  $$
  \sum_{j\in M_{i}}p_{j}\leq \lambda(l_{i}).
  $$

\end{hypothesis}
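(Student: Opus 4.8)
The plan is to derive Conjecture 3 from the two preceding structural results. By Proposition \ref{1} the essential signatures assemble, as $\lambda$ varies, into a semigroup $\Sigma\subset\mathfrak{t}^{*}_{\mathbb{Z}}\oplus\mathbb{Z}^{N}$; granting Conjecture 1, $\Sigma$ is generated by the essential signatures of the four fundamental weights $\varpi_{1},\dots,\varpi_{4}$ of $D_{4}$; and granting Conjecture 2, $\Sigma$ is saturated, so $\Sigma=\Sigma_{\mathbb{Q}}\cap(\mathfrak{t}^{*}_{\mathbb{Z}}\oplus\mathbb{Z}^{N})$. Hence it suffices to describe the rational cone $\Sigma_{\mathbb{Q}}$ by linear inequalities and then to show that its facets can be written in the special shape $\sum_{j\in M_{i}}p_{j}\le\lambda(l_{i})$, with all $p$-coefficients equal to $0$ or $1$ and right-hand side given by evaluation of $\lambda$ at a coroot-lattice vector $l_{i}$. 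Once such a facet system is produced, every inequality automatically holds on $\Sigma$ because the generators satisfy it, and saturation yields the converse inclusion, so the inequalities cut out exactly the essential signatures of each $\lambda$.

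The first concrete step is to list, for each fundamental weight, its finite set of essential signatures as lattice points in $\mathbb{Z}^{12}$ (recall $N=12$ for $D_{4}$), to form the conical hull of all these points, and to compute the facets of the resulting polyhedral cone. Here the central difficulty becomes visible: in the numbering used for Proposition \ref{8} the facet inequalities carry coefficients $a_{ij}\in\{1,2,3\}$, and a coefficient $2$ or $3$ on a genuine facet cannot be reproduced by any system with $0/1$ coefficients in the same coordinates. The resolution I would pursue rests on the observation that essentiality is not intrinsic to $V(\lambda)$ but depends on the numbering of $\Delta_{+}$ through the order on signatures: different numberings produce different semigroups $\Sigma$, each still indexing a basis of every $V(\lambda)$. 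Since the statement to be proved is existential in the data $(M_{i},l_{i})$ and each $M_{i}\subset\{1,\dots,N\}$ already refers to a numbering, I read the numbering as part of that data.

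Accordingly, the crucial step is to exhibit a numbering of the twelve positive roots for which the essential-signature cone has only $0/1$ facets, and to verify this on the explicit generators from the previous step. I would start from the numbering of Proposition \ref{8}, track how the order on signatures, governed by the partial sums $q_{i}=\sum_{j\le N-i+1}p_{j}$, transforms under a reordering, and look for a numbering in which each coefficient $2$ or $3$ is forced out: a facet whose normal has a $p$-coefficient $\ge 2$ must disappear once the reordering renders the corresponding extremal generators non-essential, being replaced by $0/1$ facets spanned by the new generators. The triality symmetry of $D_{4}$ should reduce the reordering search to a few orbits and shorten the verification. The main obstacle is precisely this elimination of the coefficients $2$ and $3$: I expect most of the work to lie in proving that for the chosen numbering no facet of $\Sigma_{\mathbb{Q}}$ carries a $p$-coefficient exceeding $1$, which amounts to a primitivity statement about the facet normals and, equivalently, to showing that every supporting hyperplane of the cone meets the lattice in a configuration of generators whose defining relation has unit coefficients.
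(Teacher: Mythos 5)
Your proposal correctly diagnoses the obstruction: in the numbering of Proposition~\ref{8} the cone $\Sigma^{f}_{\mathbb{Q}}$ has facets whose primitive normals carry coefficients $2$ and $3$ alongside coefficients $1$ (e.g.\ inequalities 28, 65, 67 in Proposition~\ref{8}), and since any irredundant inequality description of a full-dimensional rational cone must contain each facet inequality up to positive scaling, no system with $0/1$ coefficients can cut out the same cone in those coordinates. However, your proposed resolution --- reinterpret the numbering of $\Delta_{+}$ as part of the existential data and find a numbering for which every facet normal is $0/1$ --- is only announced, never executed. You describe a search strategy (track how the order on signatures transforms under reordering, use triality to cut down the cases), but you neither exhibit such a numbering nor prove that one exists. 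That existence claim is exactly the mathematical content of the statement; everything else in your argument (the semigroup structure from Proposition~\ref{1}, generation by fundamental signatures, saturation) is already supplied by the paper's Theorems~\ref{t1} and~\ref{t2}. So the proposal has a genuine gap at its central step: it is a plan for a proof, not a proof.

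It is also worth knowing that the paper itself does not prove this statement in its original form. What the paper establishes is the \emph{modified} version of the conjecture: combining Proposition~\ref{1} with Theorems~\ref{t1} and~\ref{t2} gives $\Sigma=\Sigma^{f}$, and Proposition~\ref{8} then describes the essential signatures of each highest weight by inequalities $\sum_{j\in M_{i}}a_{ij}p_{j}\leq\lambda(l_{i})$ with $a_{ij}\in\{1,2,3\}$ --- which is weaker than the statement you set out to prove. The paper is silent on whether some other numbering of the positive roots restores unit coefficients; the fact that its authors settle for the modified formulation (as was also necessary in type $B_3$) suggests this is not known, and it may well be false. Your proposal, if completed, would therefore be a strictly stronger result than the paper's, but as written the decisive step --- eliminating the coefficients $2$ and $3$ by a change of numbering --- remains entirely conjectural.
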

Conjecture 3 refines the structure of the polyhedral cone in Conjecture 2.

As we already noted, in the cases $B_3$ and $D_4$, the set of essential signatures of highest weight $\lambda$ for the chosen numbering of positive roots is given by the inequalities of the form

  $$
  \sum_{j\in M_{i}}a_{ij}p_{j}\leq \lambda(l_{i}),
  $$
i.e., a modified version of Conjecture 3 holds.

To prove the conjectures for $D_4$ we use the same method as in \cite{[T]}. However we have a general approach which can be used to prove the conjectures for all Lie algebras of types $A_n, C_n, B_3, D_4, G_2$. This will be published elsewhere.

Let $\Sigma^{f}$ be the semigroup generated by the set of essential signatures of fundamental highest weights, $\Sigma^{f}_{\mathbb{Q}}$ be the rational cone spanned by $\Sigma^{f}$ and $\Sigma^{f}(\lambda)$ be the set of signatures $\sigma$ of highest weight $\lambda$ such that $\sigma\in\Sigma^{f}_{\mathbb{Q}}$.

In Section \ref{Sec3} we fix some numbering of positive roots of $D_4$ and find the inequalities defining the cone $\Sigma^{f}_{\mathbb{Q}}$ (see Proposition \ref{4}). Then we prove:

\begin{theor}
\label{t1}
$\Sigma^{f}(\lambda+\mu)=\Sigma^{f}(\lambda)+\Sigma^{f}(\mu).$
\end{theor}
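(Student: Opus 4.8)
The inclusion $\Sigma^{f}(\lambda)+\Sigma^{f}(\mu)\subseteq\Sigma^{f}(\lambda+\mu)$ is the easy half, and I would dispose of it first. By definition both $\Sigma^{f}(\lambda)$ and $\Sigma^{f}(\mu)$ consist of integral signatures lying in the rational cone $\Sigma^{f}_{\mathbb{Q}}$. Since a cone is closed under addition and the highest weight of a product $v(\sigma')v(\sigma'')$-type sum of signatures is the sum of the highest weights, any $\sigma'+\sigma''$ with $\sigma'\in\Sigma^{f}(\lambda)$, $\sigma''\in\Sigma^{f}(\mu)$ is an integral signature of highest weight $\lambda+\mu$ lying in $\Sigma^{f}_{\mathbb{Q}}$, hence belongs to $\Sigma^{f}(\lambda+\mu)$.

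For the reverse inclusion I would translate everything into the explicit inequalities of Proposition \ref{4}. Writing $\Sigma^{f}_{\mathbb{Q}}$ as the set of $(\lambda,p)$ with $p_{j}\ge 0$ and $\sum_{j\in M_{i}}a_{ij}p_{j}\le\lambda(l_{i})$, and noting that $l_{i}\in\mathfrak{t}_{\mathbb{Z}}$ makes each $\lambda(l_{i}),\mu(l_{i})$ a nonnegative integer, a point of $\Sigma^{f}(\lambda+\mu)$ is exactly a $p\in\mathbb{Z}_{\ge 0}^{N}$ with $\sum_{j\in M_{i}}a_{ij}p_{j}\le\lambda(l_{i})+\mu(l_{i})$ for all $i$. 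The theorem then amounts to an integral splitting statement: $p=p'+p''$ with $p',p''\in\mathbb{Z}_{\ge 0}^{N}$ and $\sum_{j\in M_{i}}a_{ij}p'_{j}\le\lambda(l_{i})$, $\sum_{j\in M_{i}}a_{ij}p''_{j}\le\mu(l_{i})$ for every $i$. I would reduce this to a single fundamental weight: granting $\Sigma^{f}(\nu+\varpi)=\Sigma^{f}(\nu)+\Sigma^{f}(\varpi)$ for all dominant $\nu$ and all fundamental $\varpi$, one obtains the general case by induction on the number of fundamental summands of $\mu$, peeling off one fundamental weight at a time and using associativity of the Minkowski sum (the base case $\mu=0$ being trivial since $\Sigma^{f}(0)=\{0\}$).

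So the core reduces to the following: given $p\in\Sigma^{f}(\lambda+\varpi)$, find $e\in\Sigma^{f}(\varpi)$ with $e\le p$ componentwise such that $(\lambda,p-e)\in\Sigma^{f}(\lambda)$. Here $\Sigma^{f}(\varpi)$ is a finite, explicitly listable set, namely the essential signatures of the fundamental module $V(\varpi)$, and the requirement on $e$ unwinds to $\sum_{j\in M_{i}}a_{ij}e_{j}\ge\sum_{j\in M_{i}}a_{ij}p_{j}-\lambda(l_{i})$ for each $i$, the right-hand side being $\le\varpi(l_{i})$ by hypothesis. Thus one must produce a fundamental essential signature dominated by $p$ that \emph{simultaneously} saturates every constraint by the needed amount. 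Proving that such an $e$ always exists is where I expect the real difficulty to lie: because the coefficients $a_{ij}$ take the values $2$ and $3$ rather than only $1$, the system of Proposition \ref{4} is not totally unimodular, so the existence of an integral splitting cannot be read off from generic flow or transportation arguments. This step therefore requires a direct analysis tied to the specific $D_{4}$ inequalities and to the structure of the fundamental essential signatures, in the same spirit as the $B_{3}$ computation of \cite{[T]}.
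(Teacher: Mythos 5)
Your setup is correct and in fact coincides with the paper's: the easy inclusion follows from the cone being closed under addition, and the paper likewise reduces the hard inclusion to the statement that for every $\sigma\in\Sigma^{f}(\lambda)$ there is an essential signature $\tau$ of some fundamental highest weight $\omega_i$ (with $k_i>0$) such that $\sigma-\tau\in\Sigma^{f}(\lambda-\omega_i)$; your unwinding of this condition into the inequalities of Proposition \ref{4} is also accurate. The problem is that the proposal stops exactly where the proof begins. You correctly identify that producing a fundamental essential signature $e\le p$ that simultaneously absorbs the required excess in every one of the seventy inequalities is the real difficulty, and that no total-unimodularity or flow argument is available because of the coefficients $2$ and $3$; but you then defer that step rather than carry it out, so what remains is a reduction, not a proof.

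For comparison, the paper's treatment of this step is an extended explicit case analysis. It splits according to which of $k_1,k_2,k_3,k_4$ vanish, and within each case repeatedly subtracts specific fundamental essential signatures (such as $(\omega_1,1,12)$, $(\omega_3,3,10)$, $(\omega_2,5,8)$, $(\omega_4,2,6)$) to force the coordinates $p_{12},p_7,p_9,p_4,p_{10},p_5,p_6,p_2$ to zero one after another. This requires delicate bookkeeping of which inequalities become equalities along the way --- for instance, the argument that inequalities $11$ and $35$ cannot be tight simultaneously because inequality $57$ would then be violated, or the interplay between inequalities $14$ and $16$ when deciding whether to subtract $(\omega_4,6)$, $(\omega_3,8)$ or $(\omega_2,6,8)$. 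Only after all but a few coordinates are killed does the splitting become immediate. To complete your argument you would need to supply this combinatorial verification (or an equivalent one); as it stands, the core existence claim is asserted, not established.
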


Finally, we prove:
\begin{theor}
\label{t2}
For an arbitrary dominant weight $\lambda$,
$$|\Sigma^{f}(\lambda)|=\dim V(\lambda).$$
\end{theor}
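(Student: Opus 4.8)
The plan is to establish the two inequalities $|\Sigma^{f}(\lambda)|\le \dim V(\lambda)$ and $|\Sigma^{f}(\lambda)|\ge \dim V(\lambda)$ separately: the first from the semigroup structure of essential signatures together with Theorem \ref{t1}, and the second from a spanning argument by induction on the highest weight.

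For the upper bound I would first record the easy inclusion $\Sigma^{f}\subseteq\Sigma$: the generators of $\Sigma^{f}$ are essential by definition, and $\Sigma$ is a semigroup by Proposition \ref{1}, so every element of $\Sigma^{f}$ is essential. Writing $\lambda=\sum_{i}n_{i}\omega_{i}$ and applying Theorem \ref{t1} repeatedly gives $\Sigma^{f}(\lambda)=\sum_{i}n_{i}\Sigma^{f}(\omega_{i})$ as a sum of lattice-point sets. A direct check on the four fundamental representations of $D_{4}$ shows that $\Sigma^{f}(\omega_{i})$ consists exactly of the essential signatures of highest weight $\omega_{i}$, so $\Sigma^{f}(\omega_{i})\subseteq\Sigma$; hence every $\sigma\in\Sigma^{f}(\lambda)$ is a sum of essential signatures and is therefore itself essential. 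Since the essential signatures of highest weight $\lambda$ index a basis of $V(\lambda)$, distinct ones yield linearly independent vectors, and we obtain $|\Sigma^{f}(\lambda)|\le\dim V(\lambda)$.

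For the lower bound I would show that the vectors $\{v(\sigma)\mid\sigma\in\Sigma^{f}(\lambda)\}$ span $V(\lambda)$, arguing by induction on the level $\sum_{i}n_{i}$. The base cases are the fundamental representations, where spanning is verified by the same explicit computation used above. For the inductive step I would use the canonical surjection $\pi\colon V(\lambda)\otimes V(\mu)\to V(\lambda+\mu)$ sending $v_{\lambda}\otimes v_{\mu}$ to $v_{\lambda+\mu}$. Since $\Sigma^{f}_{\mathbb{Q}}$ is a cone, $\Sigma^{f}(\lambda)+\Sigma^{f}(\mu)\subseteq\Sigma^{f}(\lambda+\mu)$, and for $\sigma\in\Sigma^{f}(\lambda)$, $\sigma'\in\Sigma^{f}(\mu)$ the vector $\pi(v(\sigma)\otimes v(\sigma'))$, expanded through the coproduct, equals $v(\sigma+\sigma')$ plus a combination of vectors $v(\rho)$ with $\rho<\sigma+\sigma'$ in the order on signatures. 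By the inductive hypothesis the $v(\sigma)\otimes v(\sigma')$ span $V(\lambda)\otimes V(\mu)$, so their images span $V(\lambda+\mu)$; the triangularity just described then lets one replace each image by the corresponding $v(\sigma+\sigma')$, proving that $\{v(\rho)\mid\rho\in\Sigma^{f}(\lambda+\mu)\}$ spans. This yields $\dim V(\lambda)\le|\Sigma^{f}(\lambda)|$, and combined with the upper bound completes the proof.

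I expect the main obstacle to be the leading-term (straightening) claim in the inductive step: that $\pi(v(\sigma)\otimes v(\sigma'))$ has $v(\sigma+\sigma')$ as its maximal term with respect to the chosen order, with all remaining terms strictly lower. Verifying this requires a careful analysis of how the iterated coproduct distributes the lowering operators $e_{-\alpha_{i}}$ over the two tensor factors, using the explicit numbering of the positive roots of $D_{4}$ and the definition of the order through the partial sums $q_{i}$; the inequalities of Proposition \ref{4} are precisely what guarantee that the lower terms produced in this way again correspond to signatures lying in $\Sigma^{f}$.
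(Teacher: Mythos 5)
Your strategy is genuinely different from the paper's, and the lower-bound half of it has a real gap. The paper does not argue via spanning at all: it invokes a lemma from \cite{[T]} (Sec.~5.4), which rests precisely on the Minkowski-sum property of Theorem \ref{t1}, to conclude that $|\Sigma^{f}(\lambda)|$ is given by a polynomial $f(k_1,k_2,k_3,k_4)$ of total degree at most $12$; it notes that $\dim V(\lambda)$ is likewise a polynomial $w$ of degree $12$ by Weyl's dimension formula; and it then verifies $f=w$ by evaluating both at all non-negative integer quadruples with $m_1+m_2+m_3+m_4\le 12$ on a computer. Your upper bound $|\Sigma^{f}(\lambda)|\le\dim V(\lambda)$ is sound and is essentially the observation by which the paper later deduces $\Sigma=\Sigma^{f}$ from Theorems \ref{t1} and \ref{t2} together with Proposition \ref{1} (modulo the finite check that $\Sigma^{f}(\omega_i)$ contains no lattice points beyond the listed essential signatures, which is true but must actually be performed).

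The gap is in the spanning argument. Even granting the leading-term claim $\pi(v(\sigma)\otimes v(\sigma'))=c\,v(\sigma+\sigma')+\sum_{\rho<\sigma+\sigma'}c_\rho v(\rho)$ with $c\ne 0$ (which you rightly flag as unproven, and which is itself a substantial assertion whose direction relative to the lexicographic order on the $q_i$ is not obvious), the signatures $\rho$ in the correction terms arise from arbitrary redistributions of the exponents in the coproduct expansion, and there is no reason for them to satisfy the inequalities of Proposition \ref{4}; your closing sentence asserts this without justification. A triangularity argument yields spanning only when the lower-order corrections stay inside the span of the proposed spanning set, so from ``the images $\pi(v(\sigma)\otimes v(\sigma'))$ span $V(\lambda+\mu)$'' you cannot conclude that $\{v(\tau)\mid\tau\in\Sigma^{f}(\lambda+\mu)\}$ spans: the vectors $v(\rho)$ with $\rho\notin\Sigma^{f}(\lambda+\mu)$ are unaccounted for. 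Closing this would require a straightening law rewriting every such $v(\rho)$ through signatures that do lie in $\Sigma^{f}$ --- the hard combinatorial core of the Feigin--Fourier--Littelmann-type proofs, and exactly what the paper's polynomial-counting argument is designed to bypass.
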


Theorems 1, 2 together with Proposition \ref{1} imply that $\Sigma=\Sigma^{f}$ and that Conjectures 1, 2 and a modified version of Conjecture 3 are true for the Lie algebra of type $D_4$.

\section{The semigroup of essential signatures}
\label{2}

Now we show that the essential signatures of all highest weights form a semigroup.

Let $G$ be a simply connected simple complex algebraic group such that $\mathop{\mathrm{Lie}}G=\mathfrak{g}$. Let $T$ be the maximal torus in $G$ such that $\mathop{\mathrm{Lie}}T=\mathfrak{t}$ and $U$ be the maximal unipotent subgroup of $G$ such that
 $\mathop{\mathrm{Lie}}U=\mathfrak{u}$.
 Consider the homogeneous space $G/U$.
 Let $B=T\rightthreetimes U$ be the Borel subgroup. Then

$$
 \mathbb{C}[G/U]=\bigoplus_{\lambda} \mathbb{C}[G]_{\lambda}^{(B)},
$$
where $$\mathbb{C}[G]_{\lambda}^{(B)}=\{f\in\mathbb{C}[G]\mid f(gtu)=\lambda(t)f(g),\, \forall g\in G, t\in T, u\in U\}$$
is the subspace of eigenfunctions of weight $\lambda$ for $B$ acting on $\mathbb{C}[G]$ by right translations of an argument.
 Each subspace $\mathbb{C}[G]_{\lambda}^{(B)}$ is finite-dimensional and is isomorphic as a $G$-module (with respect to the action of $G$ by left translations of an argument), to the space $V(\lambda)^{*}$ of linear functions on $V(\lambda)$ (see \cite{[П]}, Theorem 3). The isomorphism is given by the formula:
$$
 V(\lambda)^{*}\ni\omega \longmapsto f_{\omega}\in\mathbb{C}[G]_{\lambda}^{(B)},
 \quad\textrm{where}\quad f_{\omega}(g)=\langle\omega, g\emph{v}_{\lambda}\rangle.
$$

Let $U^{-}$ be the maximal unipotent subgroup such that $\mathop{\mathrm{Lie}}U^{-}=\mathfrak{u^{-}}$. The function $f_{\omega}$ is uniquely determined by its restriction to the dense open subset $U^{-}\cdot$$T\cdot$$U$; moreover
\begin{multline*}
 $$f_{\omega}(u^{-}\cdot t\cdot u)=\langle\omega
,u^{-} tu\emph{v}_{\lambda}\rangle=\langle\omega
,\lambda(t)u^{-}\emph{v}_{\lambda}\rangle=\lambda(t)f_{\omega}(u^{-}),\\ \quad \forall u\in U, u^{-}\in U^{-}, t\in T.
$$
\end{multline*}
Next, $U^{-}=U_{-\alpha_{1}}\cdot\ldots\cdot U_{-\alpha_{N}}$, where
$U_{\alpha}=\{\exp(te_{\alpha})\mid$ $t\in\mathbb{C}\}$ (see \cite[Sec. X, \S 28.1]{[X]}). Hence

$$
 u^{-}=\exp(t_{1}e_{-\alpha_{1}})\cdot\ldots\cdot\exp(t_{N}e_{-\alpha_{N}}).
$$
Thus we obtain
$$
f_{\omega}(u^{-})=\left\langle\omega,
\exp(t_{1}e_{-\alpha_{1}})\cdot\ldots\cdot\exp(t_{N}e_{-\alpha_{N}})
\cdot \emph{v}_{\lambda}\right\rangle=\sum_{\sigma=(\lambda,p_{1},\dots,p_{N})}
\frac{\prod t_{i}^{p_{i}}}{\prod p_{i}!}
\langle\omega,\emph{v}(\sigma)\rangle.
$$

\begin{proposition}
 A signature $\sigma$ is essential if and only if
$\prod t_{i}^{p_{i}}$ is the least term in
 $f_{\omega}|_{U^{-}}$ for some
$\omega \in V(\lambda)^{*}$ in the sense of the order introduced above.
\end{proposition}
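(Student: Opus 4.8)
The plan is to unwind the displayed expansion of $f_\omega|_{U^-}$ and to recognize that the entire statement is a linear-algebra duality between the vectors $v(\sigma)$ and the functionals $\omega\in V(\lambda)^*$. First I would record that, by that expansion, the coefficient of the monomial $\prod_i t_i^{p_i}$ in $f_\omega|_{U^-}$ equals $\frac{1}{\prod_i p_i!}\langle\omega,v(\sigma)\rangle$, a nonzero scalar multiple of $\langle\omega,v(\sigma)\rangle$. Since $\lambda$ is fixed, the exponent vectors $(p_1,\dots,p_N)$ are in bijection with the signatures of highest weight $\lambda$, so the order on signatures transports verbatim to an order on monomials; moreover only finitely many $v(\sigma)$ are nonzero, because the weights of $V(\lambda)$ are bounded below, so $f_\omega|_{U^-}$ is a genuine polynomial and its set of terms has a least element. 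Thus the phrase ``$\prod_i t_i^{p_i}$ is the least term of $f_\omega|_{U^-}$'' unwinds precisely to the two conditions $\langle\omega,v(\sigma)\rangle\neq 0$ and $\langle\omega,v(\tau)\rangle=0$ for every $\tau<\sigma$.

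For the easy implication I would argue by contradiction. Suppose such an $\omega$ exists but $\sigma$ is not essential, so that $v(\sigma)=\sum_{\tau<\sigma}c_\tau v(\tau)$. Pairing with $\omega$ and using the vanishing conditions gives $\langle\omega,v(\sigma)\rangle=\sum_{\tau<\sigma}c_\tau\langle\omega,v(\tau)\rangle=0$, contradicting $\langle\omega,v(\sigma)\rangle\neq 0$. Hence $\sigma$ is essential.

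For the converse I would use the existence of a separating functional. Assume $\sigma$ is essential, so $v(\sigma)\notin W:=\langle v(\tau)\mid\tau<\sigma\rangle$. Since $W$ is a subspace of the finite-dimensional space $V(\lambda)$ not containing $v(\sigma)$, there is a linear functional $\omega\in V(\lambda)^*$ vanishing on $W$ but not on $v(\sigma)$. As the vectors $v(\tau)$ with $\tau<\sigma$ span $W$, this $\omega$ satisfies exactly the two conditions isolated above, so the monomial $\prod_i t_i^{p_i}$ is the least term of $f_\omega|_{U^-}$, as required.

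I do not expect a serious obstacle here: the content is entirely the dictionary ``coefficient of a monomial $=$ pairing of $\omega$ with $v(\sigma)$'' combined with the equivalence between $v(\sigma)$ being linearly independent of the lower $v(\tau)$ and the existence of a functional separating it from their span. The only points deserving care are bookkeeping rather than difficulty: verifying that the chosen order is a \emph{total} order on signatures of fixed $\lambda$, so that ``least term'' is well defined, which follows from the invertibility of the passage $(p_i)\mapsto(q_i)$; and confirming the finiteness of $f_\omega|_{U^-}$ so that a minimal term genuinely exists.
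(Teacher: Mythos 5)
Your proof is correct and follows essentially the same route as the paper: unwind the expansion so that ``least term'' means $\langle\omega,v(\tau)\rangle=0$ for all $\tau<\sigma$ and $\langle\omega,v(\sigma)\rangle\neq0$, then use this in both directions. The only (immaterial) difference is in the converse, where you take any functional separating $v(\sigma)$ from $\langle v(\tau)\mid\tau<\sigma\rangle$, while the paper takes the dual basis vector to $v(\sigma)$ with respect to the essential basis; your choice is, if anything, slightly more direct.
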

\begin{proof}
Let $\prod t_{i}^{p_{i}}$ be the least term in $f_{\omega}|_{U^{-}}$ for some $\omega\in V(\lambda)^{*}$. Then $\omega$ vanishes on all vectors $\emph{v}(\tau)$ with $\tau<\sigma$ and is nonzero at  $\emph{v}(\sigma)$. Consequently, $\emph{v}(\sigma)$ cannot be expressed via $\emph{v}(\tau)$ with $\tau<\sigma$, and hence  $\sigma$ is essential.

Conversely, let $\sigma$ be essential. Consider a function $\omega$ that vanishes on $\emph{v}(\tau)$ for all essential $\tau$ except for $\sigma$. Obviously, $f_{\omega}|_{U^{-}}$ has the desired least term.
\end{proof}

\begin{proof}[Proof of Proposition \ref{1}]
Suppose that the least terms in $f|_{U^{-}}$ and $g|_{U^{-}}$ correspond to the essential signatures $\lambda$ and $\mu$. Then the least term in $(f\cdot g)|_{U^{-}}$ corresponds to the signature $\lambda+\mu$. Hence $\lambda+\mu$ is essential.
\end{proof}

\section {Essential signatures in type $D_4$}
\label{Sec3}
Consider the case $D_4$. Let $\beta_1, \beta_2, \beta_3, \beta_4$ be the set of simple roots for $D_4$ and let $\omega_1, \omega_2, \omega_3, \omega_4$ be the fundamental weights:
\begin{center}
\begin{picture}(100,80)
\put(10,50){\circle{5}}
\put(12,50){\line(1,0){26}}
\put(5,35){$\beta_1$}
\put(40,50){\circle{5}}
\put(35,35){$\beta_2$}
\put(42.5,51.5){\line(1,1){16}}
\put(42.5,48.5){\line(1,-1){16}}
\put(60,70){\circle{5}}
\put(55,55){$\beta_3$}
\put(60,30){\circle{5}}
\put(55,15){$\beta_4$}
\end{picture}
\end{center}

Denote the weights of the representation $V(\omega_1)$ by $\pm\varepsilon_1, \pm\varepsilon_2, \pm\varepsilon_3, \pm\varepsilon_4$. One has:

\begin{tabular}{llll}
$\beta_1=\varepsilon_1-\varepsilon_2$&
$\beta_2=\varepsilon_2-\varepsilon_3$&
$\beta_3=\varepsilon_3-\varepsilon_4$&
$\beta_4=\varepsilon_3+\varepsilon_4$.
\end{tabular}

 Let us number the positive roots as follows:

\begin{center}
\begin{tabular}{lll}
$\alpha_1=\varepsilon_1+\varepsilon_2$ &
$\alpha_2=\varepsilon_2+\varepsilon_3$ &
$\alpha_3=\varepsilon_1+\varepsilon_3$ \\
$\alpha_4=\varepsilon_3+\varepsilon_4$&
$\alpha_5=\varepsilon_2+\varepsilon_4$ &
$\alpha_6=\varepsilon_1+\varepsilon_4$ \\
$\alpha_7=\varepsilon_1-\varepsilon_3$ &
$\alpha_8=\varepsilon_1-\varepsilon_4$&
$\alpha_9=\varepsilon_3-\varepsilon_4$ \\
$\alpha_{10}=\varepsilon_2-\varepsilon_4$ &
$\alpha_{11}=\varepsilon_2-\varepsilon_3$ &
$\alpha_{12}=\varepsilon_1-\varepsilon_2$.
\end{tabular}
\end{center}

Now we need to obtain all essential signatures of fundamental highest weights.

 The representations of highest weights $\omega_1, \omega_3, \omega_4$ have one-dimensional weight subspaces. Hence it is easy to find essential signatures. Indeed, to obtain the essential signature of the weight $\mu$ we just need to find the minimal signature $\sigma$ such that the vector $v(\sigma)$ has weight $\mu$.

 Here are the essential signatures of highest weight $\omega_1$ (the highest weight component is omitted):

\begin{center}
\begin{tabular}{lll}
1. $(0,0,0,0,0,0,0,0,0,0,0,0)$  & 2. $(0,0,0,0,0,0,0,0,0,0,0,1)$ \\ 3. $(0,0,0,0,0,0,0,1,0,0,0,0)$  &4. $(0,0,0,0,0,0,1,0,0,0,0,0)$ \\

5. $(0,0,0,0,0,1,0,0,0,0,0,0)$ &6. $(0,0,1,0,0,0,0,0,0,0,0,0)$ \\7. $(1,0,0,0,0,0,0,0,0,0,0,0)$ & 8. $(1,0,0,0,0,0,0,0,0,0,0,1)$.
\end{tabular}
\end{center}
Here are the essential signatures of highest weight $\omega_3$ (the highest weight component is omitted):

\begin{center}
\begin{tabular}{lll}
1. $(0,0,0,0,0,0,0,0,0,0,0,0)$  & 2. $(0,0,0,0,0,0,0,0,0,1,0,0)$ \\ 3. $(0,0,0,0,0,0,0,0,1,0,0,0)$  &4. $(0,0,0,0,0,0,0,1,0,0,0,0)$ \\

5. $(0,0,1,0,0,0,0,0,0,0,0,0)$ &6. $(0,1,0,0,0,0,0,0,0,0,0,0)$ \\7. $(1,0,0,0,0,0,0,0,0,0,0,0)$& 8. $(0,0,1,0,0,0,0,0,0,1,0,0)$.
\end{tabular}
\end{center}
Here are the essential signatures of highest weight $\omega_4$ (the highest weight component is omitted):

\begin{center}
\begin{tabular}{ll}
1. $(0,0,0,0,0,0,0,0,0,0,0,0)$  & 2. $(0,0,0,0,0,1,0,0,0,0,0,0)$ \\ 3. $(0,0,0,0,1,0,0,0,0,0,0,0)$ &4. $(0,0,0,1,0,0,0,0,0,0,0,0)$\\

5. $(0,0,1,0,0,0,0,0,0,0,0,0)$ &6. $(0,1,0,0,0,0,0,0,0,0,0,0)$ \\7. $(1,0,0,0,0,0,0,0,0,0,0,0)$& 8. $(0,1,0,0,0,1,0,0,0,0,0,0)$.
\end{tabular}
\end{center}

The representation of highest weight $\omega_2$ is the adjoint representation. If $\mu\neq0$ is a weight of the representation $V(\omega_2)$, then the weight subspace $ V_{\mu}$ is one-dimensional, and $\dim V_{0}=4$. It is easy to verify that the vectors $v(\sigma_i)$, where $\sigma_i$ are the four minimal signatures of weight 0, ($i=1, 2, 3, 4$), are linearly independent.
Here are the essential signatures of highest weight $\omega_2$ (the highest weight component is omitted):

\begin{center}
\begin{tabular}{ll}
1. $(0,0,0,0,0,0,0,0,0,0,0,0)$  & 2. $(0,0,0,0,0,0,0,0,0,0,1,0)$ \\ 3. $(0,0,0,0,0,0,0,0,0,1,0,0)$ & 4. $(0,0,0,0,0,0,0,1,0,0,0,0)$ \\
5. $(0,0,0,0,0,0,1,0,0,0,0,0)$ &6. $(0,0,0,0,0,1,0,0,0,0,0,0)$ \\7. $(0,0,0,0,1,0,0,0,0,0,0,0)$& 8. $(0,0,1,0,0,0,0,0,0,0,0,0)$\\
9. $(0,1,0,0,0,0,0,0,0,0,0,0)$ & 10. $(1,0,0,0,0,0,0,0,0,0,0,0)$ \\11. $(0,0,0,0,0,0,0,1,0,0,1,0)$ & 12. $(0,0,0,0,0,1,0,0,0,0,1,0)$\\
13. $(0,0,1,0,0,0,0,0,0,0,1,0)$ & 14. $(0,1,0,0,0,0,0,0,0,0,1,0)$ \\15. $(1,0,0,0,0,0,0,0,0,0,1,0)$ & 16. $(0,0,0,0,0,1,0,0,0,1,0,0)$\\
17. $(0,0,1,0,0,0,0,0,0,1,0,0)$ & 18. $(1,0,0,0,0,0,0,0,0,1,0,0)$ \\ 19. $(0,0,0,0,0,1,0,1,0,0,0,0)$ & 20. $(0,0,0,0,1,0,0,1,0,0,0,0)$ \\
21. $(1,0,0,0,0,0,0,1,0,0,0,0)$ & 22. $(1,0,0,0,0,0,1,0,0,0,0,0)$ \\
23. $(0,1,0,0,0,1,0,0,0,0,0,0)$ & 24. $(1,0,0,0,0,1,0,0,0,0,0,0)$\\
25. $(1,0,0,0,1,0,0,0,0,0,0,0)$ & 26. $(1,0,1,0,0,0,0,0,0,0,0,0)$ \\ 27. $(1,1,0,0,0,0,0,0,0,0,0,0)$ & 28. $(2,0,0,0,0,0,0,0,0,0,0,0)$.
\end{tabular}
\end{center}

 Recall that $\Sigma^{f}$ is the semigroup generated be the essential signatures of fundamental highest weights and $\Sigma^{f}_{\mathbb{Q}}$ is the rational cone spanned by $\Sigma^{f}$. We use the
following notation for the coordinates of a dominant weight in the basis
of fundamental weights:   $\lambda=k_1\omega_1+k_2\omega_2+k_3\omega_3+k_4\omega_4$.
\begin{proposition}
The cone $\Sigma^{f}_{\mathbb{Q}}$\label{4} is given by the inequalities:
\label{8}
\begin{enumerate}
{\scriptsize
\item $p_{12}\leq k_1$
\item $p_{11}\leq k_2$
\item $p_9\leq k_3$
\item $p_4\leq k_4$
\item $p_7+p_{11}+p_{12}\leq k_1+k_2$
\item $p_7+p_8+p_9+p_{10}+p_{12}\leq k_1+k_2+k_3$
\item $p_7+p_9+p_{10}+p_{11}+p_{12}\leq k_1+k_2+k_3$
\item $p_9+p_{10}+p_{11}\leq k_2+k_3$
\item $p_4+p_5+p_6+p_7+p_{12}\leq k_1+k_2+k_4$
\item $p_4+p_5+p_7+p_{11}+p_{12}\leq k_1+k_2+k_4$
\item $p_4+p_5+p_{11}\leq k_2+k_4$
\item $p_2+p_4+p_5+p_9+p_{10}\leq k_2+k_3+k_4$
\item $p_4+p_5+p_9+p_{10}+p_{11}\leq k_2+k_3+k_4$
\item $p_3+p_4+p_5+p_6+p_7+p_9+p_{12}\leq k_1+k_2+k_3+k_4$
\item $p_2+p_3+p_4+p_5+p_7+p_9+p_{12}\leq k_1+k_2+k_3+k_4$
\item $p_2+p_3+p_4+p_7+p_8+p_9+p_{12}\leq k_1+k_2+k_3+k_4$
\item $p_2+p_4+p_7+p_8+p_9+p_{10}+p_{12}\leq k_1+k_2+k_3+k_4$
\item $p_4+p_5+p_7+p_9+p_{10}+p_{11}+p_{12}\leq k_1+k_2+k_3+k_4$
\item $p_2+p_4+p_5+p_7+p_9+p_{10}+p_{12}\leq k_1+k_2+k_3+k_4$
\item $p_1+p_3+p_4+p_5+p_6+p_7+p_8+p_9+p_{11}\leq k_1+2k_2+k_3+k_4$
\item $p_1+p_2+p_3+p_4+p_5+p_7+p_8+p_9+p_{11}\leq k_1+2k_2+k_3+k_4$
\item $p_3+p_4+p_5+p_6+p_7+p_8+p_9+p_{11}+p_{12}\leq k_1+2k_2+k_3+k_4$
\item $p_2+p_3+p_4+p_5+p_7+p_8+p_9+p_{11}+p_{12}\leq k_1+2k_2+k_3+k_4$
\item $p_1+p_2+p_4+p_5+p_7+p_8+p_9+p_{10}+p_{11}\leq k_1+2k_2+k_3+k_4$
\item $p_2+p_4+p_5+p_7+p_8+p_9+p_{10}+p_{11}+p_{12}\leq k_1+2k_2+k_3+k_4$
\item $p_1+p_4+p_5+p_6+p_7+p_8+p_9+p_{10}+p_{11}\leq k_1+2k_2+k_3+k_4$
\item $p_4+p_5+p_6+p_7+p_8+p_9+p_{10}+p_{11}+p_{12}\leq k_1+2k_2+k_3+k_4$
\item $p_2+p_3+p_4+p_5+p_7+p_8+2p_9+p_{10}+p_{11}+p_{12}\leq k_1+2k_2+2k_3+k_4$
\item $p_3+p_4+p_5+p_6+p_7+p_8+2p_9+p_{10}+p_{11}+p_{12}\leq k_1+2k_2+2k_3+k_4$
\item $p_1+p_2+p_3+p_4+p_5+p_7+p_8+2p_9+p_{10}+p_{11}\leq k_1+2k_2+2k_3+k_4$
\item $p_1+p_3+p_4+p_5+p_6+p_7+p_8+2p_9+p_{10}+p_{11}\leq k_1+2k_2+2k_3+k_4$
\item $p_2+p_4+p_5+p_7+p_8+2p_9+2p_{10}+p_{11}+p_{12}\leq k_1+2k_2+2k_3+k_4$
\item $p_2+p_3+2p_4+p_5+p_6+p_7+p_8+p_9+p_{11}+p_{12}\leq k_1+2k_2+k_3+2k_4$
\item $p_2+2p_4+p_5+p_6+p_7+p_8+p_9+p_{10}+p_{11}+p_{12}\leq k_1+2k_2+k_3+2k_4$
\item $p_1+p_2+p_3+2p_4+p_5+p_6+p_7+p_8+p_9+p_{11}\leq k_1+2k_2+k_3+2k_4$
\item $p_1+p_2+2p_4+p_5+p_6+p_7+p_8+p_9+p_{10}+p_{11}\leq k_1+2k_2+k_3+2k_4$
\item $p_2+p_3+2p_4+2p_5+p_6+p_7+p_9+p_{11}+p_{12}\leq k_1+2k_2+k_3+2k_4$
\item $p_2+2p_4+2p_5+p_6+p_7+p_9+p_{10}+p_{11}+p_{12}\leq k_1+2k_2+k_3+2k_4$
\item $p_3+p_4+p_5+p_6+2p_7+p_8+p_9+p_{11}+2p_{12}\leq 2k_1+2k_2+k_3+k_4$
\item $p_4+p_5+p_6+2p_7+p_8+p_9+p_{10}+p_{11}+2p_{12}\leq 2k_1+2k_2+k_3+k_4$
\item $p_2+p_3+p_4+p_5+2p_7+p_8+p_9+p_{11}+2p_{12}\leq 2k_1+2k_2+k_3+k_4$
\item $p_2+p_4+p_5+2p_7+p_8+p_9+p_{10}+p_{11}+2p_{12}\leq 2k_1+2k_2+k_3+k_4$
\item$p_1+p_2+p_3+2p_4+p_5+p_6+p_7+p_8+2p_9+p_{10}+p_{11}\leq k_1+2k_2+2k_3+2k_4$
\item $p_2+p_3+2p_4+p_5+p_6+p_7+p_8+2p_9+p_{10}+p_{11}+p_{12}\leq k_1+2k_2+2k_3+2k_4$
\item $p_2+p_3+2p_4+2p_5+p_6+p_7+2p_9+p_{10}+p_{11}+p_{12}\leq k_1+2k_2+2k_3+2k_4$
\item $p_2+p_3+2p_4+p_5+p_6+2p_7+p_8+p_9+p_{11}+2p_{12}\leq 2k_1+2k_2+k_3+2k_4$
\item $p_2+2p_4+p_5+p_6+2p_7+p_8+p_9+p_{10}+p_{11}+2p_{12}\leq 2k_1+2k_2+k_3+2k_4$
\item $p_2+p_3+2p_4+2p_5+p_6+2p_7+p_9+p_{11}+2p_{12}\leq 2k_1+2k_2+k_3+2k_4$
\item $p_2+2p_4+2p_5+p_6+2p_7+p_9+p_{10}+p_{11}+2p_{12}\leq 2k_1+2k_2+k_3+2k_4$
\item $p_2+p_3+p_4+p_5+2p_7+p_8+2p_9+p_{10}+p_{11}+2p_{12}\leq 2k_1+2k_2+2k_3+k_4$
\item $p_3+p_4+p_5+p_6+2p_7+p_8+2p_9+p_{10}+p_{11}+2p_{12}\leq 2k_1+2k_2+2k_3+k_4$
\item $p_2+p_4+p_5+2p_7+p_8+2p_9+2p_{10}+p_{11}+2p_{12}\leq 2k_1+2k_2+2k_3+k_4$
\item  $p_2+p_3+2p_4+p_5+p_6+2p_7+p_8+2p_9+p_{10}+p_{11}+2p_{12}\leq 2k_1+2k_2+2k_3+2k_4$
\item $p_2+p_3+2p_4+2p_5+p_6+2p_7+2p_9+p_{10}+p_{11}+2p_{12}\leq 2k_1+2k_2+2k_3+2k_4$
\item $p_2+p_3+2p_4+2p_5+p_6+p_7+p_8+p_9+2p_{11}+p_{12}\leq k_1+3k_2+k_3+2k_4$
\item $p_2+2p_4+2p_5+p_6+p_7+p_8+p_9+p_{10}+2p_{11}+p_{12}\leq k_1+3k_2+k_3+2k_4$
\item $p_1+p_2+p_3+2p_4+2p_5+p_6+p_7+p_8+p_9+2p_{11}\leq k_1+3k_2+k_3+2k_4$
\item $p_1+p_2+2p_4+2p_5+p_6+p_7+p_8+p_9+p_{10}+2p_{11}\leq k_1+3k_2+k_3+2k_4$
\item $p_1+p_2+2p_4+2p_5+p_6+p_7+p_8+2p_9+2p_{10}+2p_{11}\leq k_1+3k_2+2k_3+2k_4$
\item $p_2+2p_4+2p_5+p_6+p_7+p_8+2p_9+2p_{10}+2p_{11}+p_{12}\leq k_1+3k_2+2k_3+2k_4$
\item $p_1+p_2+2p_4+2p_5+p_6+2p_7+p_8+p_9+p_{10}+2p_{11}+p_{12}\leq 2k_1+3k_2+k_3+2k_4$
\item $p_1+p_2+p_3+2p_4+2p_5+p_6+2p_7+p_8+p_9+2p_{11}+p_{12}\leq 2k_1+3k_2+k_3+2k_4$
\item $p_1+p_2+2p_4+2p_5+p_6+2p_7+p_8+2p_9+2p_{10}+2p_{11}+p_{12}\leq 2k_1+3k_2+2k_3+2k_4$
\item $p_2+p_3+2p_4+2p_5+p_6+p_7+p_8+3p_9+2p_{10}+2p_{11}+p_{12}\leq k_1+3k_2+3k_3+2k_4$
\item $p_1+p_2+p_3+2p_4+2p_5+p_6+p_7+p_8+3p_9+2p_{10}+2p_{11}\leq k_1+3k_2+3k_3+2k_4$
\item $p_2+p_3+2p_4+2p_5+p_6+3p_7+p_8+p_9+2p_{11}+3p_{12}\leq 3k_1+3k_2+k_3+2k_4$
\item $p_2+2p_4+2p_5+p_6+3p_7+p_8+p_9+p_{10}+2p_{11}+3p_{12}\leq 3k_1+3k_2+k_3+2k_4$
\item
$p_2+p_3+2p_4+2p_5+p_6+3p_7+p_8+3p_9+2p_{10}+2p_{11}+3p_{12}\leq 3k_1+3k_2+3k_3+2k_4$
\item
$p_1+p_2+p_3+2p_4+2p_5+p_6+2p_7+p_8+3p_9+2p_{10}+2p_{11}+p_{12}\leq 2k_1+3k_2+3k_3+2k_4$
\item
$p_2+2p_4+2p_5+p_6+3p_7+p_8+2p_9+2p_{10}+2p_{11}+3p_{12}\leq 3k_1+3k_2+2k_3+2k_4$.

\par}
\end{enumerate}

\end{proposition}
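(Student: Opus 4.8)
The statement is a purely polyhedral assertion about the finitely generated cone $\Sigma^{f}_{\mathbb{Q}}$, so the plan is to compute its facets directly from the generator lists given above. First I would assemble the generating rays in $\mathbb{R}^{16}$ with coordinates $(k_1,k_2,k_3,k_4,p_1,\dots,p_{12})$: encoding a signature of highest weight $\omega_i$ as the point whose weight part is the $i$-th standard basis vector, the essential signatures of $\omega_1,\omega_2,\omega_3,\omega_4$ produce $8+28+8+8=52$ rays, and by definition $\Sigma^{f}_{\mathbb{Q}}$ is the cone they span. Since the generators include the four coordinate rays $e_{k_i}$ (the trivial signatures) together with, for each $j$, a ``singleton'' generator of the form $e_{k_i}+e_{p_j}$, they span $\mathbb{R}^{16}$; being contained in the nonnegative orthant, the cone is moreover pointed, so it has a well-defined finite facet description.

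The proof then splits into the two standard inclusions. For the inclusion $\Sigma^{f}_{\mathbb{Q}}\subseteq P$, where $P$ is the region cut out by the listed inequalities together with the nonnegativity constraints $p_j\ge 0,\ k_i\ge 0$, it suffices to check that each of the $52$ generators satisfies each of the listed inequalities; since those inequalities are homogeneous and linear, validity on the generators propagates to every nonnegative combination. This is a finite and routine verification. The reverse inclusion $P\subseteq\Sigma^{f}_{\mathbb{Q}}$ is the substantive direction: here I would compute the extreme rays of $P$ and exhibit each one as an explicit nonnegative combination of the $52$ generators. Equivalently, one runs the double-description method / Fourier--Motzkin elimination on the generators and confirms that the resulting facet list coincides, after clearing denominators, with the inequalities stated. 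I emphasize that the nonnegativity constraints must be adjoined: the listed inequalities alone do not force $p_j\ge 0$ (a point with all $p_j$ very negative and $k=0$ satisfies all of them), so $P$ fails to be pointed without them and the literal equality of cones would break.

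To make the computation feasible and self-checking I would exploit triality. The outer $S_3$-symmetry of $D_4$ fixes $\beta_2=\alpha_{11}$ and permutes the three outer simple roots $\beta_1=\alpha_{12},\ \beta_3=\alpha_{9},\ \beta_4=\alpha_4$, hence permutes $\omega_1,\omega_3,\omega_4$ (the coordinates $k_1,k_3,k_4$) while fixing $k_2$, and it induces a permutation of the twelve positive roots and thus of the coordinates $p_j$. Under this action the generating set for $\omega_2$ is preserved and those for $\omega_1,\omega_3,\omega_4$ are permuted among themselves, so both the cone and its facet set are $S_3$-invariant. Consequently the listed inequalities organize into triality orbits of sizes $1,2,3,6$, visibly so from the right-hand sides: for example $\{p_{12}\le k_1,\ p_9\le k_3,\ p_4\le k_4\}$ is one orbit, while $p_{11}\le k_2$ is a fixed point. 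It then suffices to verify one representative per orbit and transport by symmetry, which also guards against transcription errors in so long a list.

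The main obstacle is the completeness of the facet list in the reverse inclusion: establishing not merely that the inequalities are valid, but that together with nonnegativity they already cut out $\Sigma^{f}_{\mathbb{Q}}$ and nothing larger, i.e.\ that no facet has been omitted. This is the large, error-prone convex-hull computation in dimension $16$ with dozens of rays and facets; I expect to carry it out with a verified polyhedral routine, using triality to reduce the work to orbit representatives and certifying each extreme ray of $P$ by an explicit nonnegative expression in the generators. (If one additionally wants the irredundant description, each listed inequality should be checked to be tight on $15$ linearly independent generators, but this is not needed for the stated set-equality.)
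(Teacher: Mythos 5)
Your proposal is correct and is essentially the paper's own argument: the paper likewise treats this as a finite polyhedral duality computation, passing from the explicit list of essential signatures of fundamental weights (the generators of $\Sigma^{f}_{\mathbb{Q}}$) to the facet inequalities by the standard enumeration of maximal-rank subsystems, i.e.\ the same double-description conversion you describe. Your observations about adjoining the nonnegativity constraints $p_j\ge 0$, $k_i\ge 0$ (which the paper leaves implicit) and about using the triality symmetry to organize and check the orbit structure of the inequalities are sensible refinements but do not change the method.
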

\begin{proof}
We have found all essential signatures of fundamental highest weights. Hence we have the inequalities defining the dual cone $(\Sigma^{f}_{\mathbb{Q}})^{\vee}$. (A covector $v$ is in $(\Sigma^{f}_{\mathbb{Q}})^{\vee}$ if and only if the pairing $\langle v,\sigma_i\rangle$ is nonnegative for each essential signature $\sigma_i$ of fundamental highest weight.) To obtain the inequalities defining the cone $\Sigma^{f}_{\mathbb{Q}}$ we need to find generators of the dual cone $(\Sigma^{f}_{\mathbb{Q}})^{\vee}$. It can be done by using a standard algorithm for finding generators of a cone defined by linear inequalities in $d$-dimensional vector space. We select $d-1$ inequalities and turn them into equalities, thus obtaining a system of linear equations. Of all these systems, we select systems of maximal rank (i.e., systems with one-dimensional space of solutions). For each system of maximal rank, we check whether a non-zero solution of it or the opposite vector is in the cone. These vectors are the desired generators.
\end{proof}
Recall that we denote a set of signatures of highest weight $\lambda$ satisfying the above inequalities by $\Sigma^{f}(\lambda)$.

\begin{proof}[Proof of Theorem \ref{t1}]
Let $\sigma=(\lambda,p_1,\ldots,p_{12})\in \Sigma^{f}(\lambda)$ and let $\lambda=k_1\omega_1+k_2\omega_2+k_3\omega_3+k_4\omega_4$. It is enough to find an essential signature $\tau=(\omega_i,q_1,\ldots,q_{12})$ of highest weight $\omega_i$ such that $\sigma-\tau\in \Sigma^{f}(\lambda-\omega_i)$.

To prove the Theorem we find some essential signatures $\tau$ of fundamental highest weights such that either $\sigma-\tau\in\Sigma^{f}(\lambda-\omega_i)$ or the conditions of the form $p_s=0$ holds. Arguing like that, we reduce the problem to the case where $\sigma$ has a simple structure. In this case we show directly that $\sigma$ is representable as a sum of essential signatures of fundamental highest weights.

To verify that $\sigma-\tau\in\Sigma^{f}(\lambda-\omega_i)$ we have to check that, after subtracting $\tau$, the left-hand side of each inequality decreases not less than right-hand side. Note that if we suppose $p_s=0$ for some $s=1,\ldots,12$ then we need to check condition above only for essential inequalities. For example, if $p_i=0$ for all $i\neq12$, then we have only one inequality (the first inequality) to check.

We use a notation $(\omega_i,j,k)$ for the essential signature $(\omega_i,s_1,\ldots,s_{12})$ of highest weight $\omega_i$ with $s_j=s_k=1$ and $s_l=0$ for $l\neq k,j$; $(\omega_i,j)$ stands for the essential signature of highest weight $\omega_i$ with $s_j=1$ and $s_l=0$ for $l\neq j$.

\paragraph{\textbf{Case} $\bf{k_1, k_2, k_3, k_4}>\textbf{0}$.}
$\newline$
First we want to show that we can assume $p_{12}=0$.
$\newline$
Suppose $p_1\neq0$ and $p_{12}\neq0$, then $\sigma-\tau\in\Sigma^{f}(\lambda-\omega_1)$ for $\tau=(\omega_1,1,{12})$. Hence $p_1=0$ or $p_{12}=0$. If $p_{12}=k_1$ and $p_1=0$, then we can take $\tau=(\omega_1,{12})$. Hence we can assume $p_1=0, p_{12}<k_1$.
 $\newline$
 Next, if $p_7\neq0$, then take $\tau=(\omega_1,7)$, hence we can suppose $p_7=0, p_{12}<k_1$ and $p_1=0$. In this case if $p_{12}\neq0$, then $\tau=(\omega_1,{12})$. Thus we obtain $\bf{p_{12}=0}$.

 $\newline$
 If $p_7\neq0$, then take $\tau=(\omega_1,7)$ hence we can assume $\bf{p_7=0}$.
 $\newline$
 If $p_9\neq0$, then take $\tau=(\omega_3,9)$ hence we can assume $\bf{p_9=0}$.
 $\newline$
 If $p_4\neq0$, then take $\tau=(\omega_4,4)$ hence we can assume $\bf{p_4=0}$.

$\newline$
Now we show that we can suppose $p_{10}=0$. If $p_3\neq0$ and $p_{10}\neq0$, then take $\tau=(\omega_3,3,{10})$ hence we can assume $p_3=0$ or $p_{10}=0$. If $p_3=0$, then take $\tau=(\omega_3,{10})$ hence we obtain $p_{10}=0$. Thus we can assume $\bf{p_{10}=0}$.

$\newline$
Now we show that we can assume $p_5=0$. If $p_{11}=k_2$, then take $\tau=(\omega_4,5)$ hence we get $p_5=0$. If $p_{11}\neq k_2$, then take $\tau=(\omega_2,5,8)$ hence we can assume $p_5=0$ or $p_8=0$.
$\newline$
Suppose $p_8=0$. Then take $\tau=(\omega_4,2,6)$ hence we obtain $p_2=0$ or $p_6=0$ or the inequality 11 turns into equality for $\sigma$. In any case we can take $\tau=(\omega_4,5)$ hence we get $p_5=0$ (the inequalities 35, 11 can not turn into equalities together, because otherwise the inequality 57 is violated). So $\bf{p_5=0}$.

$\newline$
Now we show that $p_6=0$.
If $p_2\neq0$ and $p_6\neq0$, then take $\tau=(\omega_4,2,6)$. Then $p_2=0$ or $p_6=0$. Suppose $p_2=0$, $p_6\neq0$, then take $\tau=(\omega_4,6)$ hence we obtain $p_6=0$ or the inequality 16 turns into equality for $\sigma$. If $p_8=0$, then take $\tau=(\omega_4,6)$ hence we can suppose $p_6=0$. If $p_8\neq0$, then take $\tau=(\omega_3,8)$ hence we get $p_8=0$ or the inequality 14 turns into equality. If both inequalities 14, 16 turn into equalities, then take $\tau=(\omega_2,6,8)$ hence we obtain $p_6=0$ or $p_8=0$. Thus we can assume $\bf{p_6=0}$.

 $\newline$
 If $p_2\neq0$, then take $\tau=(\omega_4,2)$ hence we get $\bf{p_2=0}$.
  $\newline$
  Thus only the coordinates $p_1, p_3, p_8, p_{11}$ can be nonzero. Hence $\sigma$ is representable as a sum of essential signatures of fundamental highest weights (essential inequalities are 2, 6, 16, 21). Indeed, we have the following inequalities:
  \begin{enumerate}
  \item $p_{11}\leq k_2$
  \item $p_8\leq k_1+k_2+k_3$
  \item $p_3+p_8\leq k_1+k_2+k_3+k_4$
  \item $p_1+p_3+p_8+p_{11}\leq k_1+2k_2+k_3+k_4.$
  \end{enumerate}
  If $p_8\neq0$, then take $\tau=(\omega_3,8)$ hence we can assume $p_8=0$. If $p_3\neq0$, then take $\tau=(\omega_1,3)$ hence we obtain $p_3=0$. If $p_{11}\neq0$ and $p_1\neq0$, then take $\tau=(\omega_2,p_1,p_{11})$ hence we get $p_{11}=0$ or $p_1=0$. Finally, take $\tau=(\omega_2,1)$ or $\tau=(\omega_2,p_{11})$.

   Thus we reduced $\sigma$ to zero.

$\newline$
We have considered the case $k_1, k_2, k_3, k_4>0$. For arbitrary $k_1,k_2,k_3,k_4$ we still may assume $\bf{p_{12}=0}, \bf{p_4=0}, \bf{p_9=0}$ using the same arguments as above. Moreover, if $k_1\neq0$, then take $\tau=(\omega_1,7)$ hence we get $p_7=0$. If $k_1=0, k_2\neq0,p_1\neq0$, then take $\tau=(\omega_2,1,7)$ hence we obtain $p_7=0$ or $p_1=0$. If $k_1=0, k_2\neq0, p_1=0$, then take $\tau=(\omega_2,7)$ hence we get $p_7=0$. Finally, if $k_1=0, k_2=0$, then $p_7=0$ (inequality 5). Thus we can assume $\bf{p_7=0}$ in any case.
 $\newline$
 If $k_3\neq0$, then take $\tau=(\omega_3,3,{10})$ hence we obtain $p_3=0$ or $p_{10}=0$. If $k_3=0, k_2\neq0$, then take $\tau=(\omega_2,3,{10})$ $p_3=0$ or $p_{10}=0$. If  $k_3=0, k_2=0$, then $p_{10}=0$ (inequality 8). Thus we can assume $p_3=0$ or $p_{10}=0$ in any case.

 \paragraph{\textbf{Case} $\bf{k_1, k_2, k_4}>\bf{0, k_3=0}$.}
  $\newline$
  From the arguments above we get $p_3=0$ or $p_{10}=0$. We want to show that we can suppose $p_{10}=0$. Suppose $p_3=0$.
  $\newline$
  If $p_2\neq0$ and $p_6\neq0$, then take $\tau=(\omega_4,2,6)$ hence we obtain $p_2=0$ or $p_6=0$ or the inequality 13 turns into equality. In the last case necessarily $p_5\neq0$ (see inequality 8) and then $\tau=(\omega_4,5)$. Hence $p_2=0$ or $p_6=0$.
   $\newline$
   If $p_6=0$ and $p_1\neq0$, then take $\tau=(\omega_2,1,{10})$ hence we get $p_1=0$ or $p_{10}=0$. If $p_6=0, p_1=0$, then take $\tau=(\omega_2,{10})$ hence we can assume $p_{10}=0$. Thus $p_{10}=0$ in the case $p_6=0$.
    $\newline$
    If $p_2=0$, $p_6\neq0$, then take $\tau=(\omega_2,6,{10})$ hence we obtain $p_6=0$ or $p_{10}=0$. Thus we can assume $\bf{p_{10}=0}$.

    Now we can use the arguments from the case $k_1, k_2, k_3, k_4>0$ with $(\omega_3,8)$ replaced by $(\omega_1,8)$.

\paragraph{\textbf{Case} $\bf{k_4=0, k_1, k_2, k_3}>\textbf{0}$.}
$\newline$
  We can suppose $\bf{p_{10}=0}$, and $p_5=0$ or $p_8=0$ (see the case $k_1, k_2, k_3, k_4>0$). If $p_8=0$ and $p_1\neq0$, then take $\tau=(\omega_2,1, 5)$ hence we get $p_{5}=0$ or $p_1=0$. If $p_8=0$ and $p_1=0$, then take $\tau=(\omega_2,5)$ hence we obtain $p_{5}=0$. Thus we can assume $\bf{p_5=0}$.
  $\newline$
  If $p_2\neq0$ and $p_6\neq0$, then take $\tau=(\omega_2,2,6)$ hence we get $p_{2}=0$ or $p_6=0$. We want to show that it is enough to assume $p_6=0$.
  $\newline$
  If $p_2=0$, $p_6\neq0$, then take $\tau=(\omega_1,6)$ hence we obtain $p_{6}=0$ or the inequality 16 turns into equality for $\sigma$.
  $\newline$
   If $p_8=0$, then take $\tau=(\omega_2,6)$ hence we can suppose $p_{6}=0$.
   $\newline$
   If $p_6\neq0$ and $p_8\neq0$, then take $\tau=(\omega_3,8)$ hence we get $p_{8}=0$ or the inequality 14 turns into equality for $\sigma$.
   $\newline$
   If the inequalities 14 and 16 turn into equalities together for $\sigma$, then take $\tau=(\omega_2,6,8)$ hence we obtain $p_{6}=0$ or $p_8=0$. Thus $\bf{p_6=0}$ in any case.
    $\newline$
    If $p_2\neq0$, then take $\tau=(\omega_3,2)$ hence we get $\bf{p_{2}=0}$. Thus we obtain that only coordinates $p_1, p_3, p_8, p_{11}$ can be nonzero. Now it easy to see that $\sigma$ is representable as a sum of the essential signatures of fundamental highest weights (essential inequalities are 2, 6, 16, 21).

\paragraph{\textbf{Case} $\bf{k_2=0, k_1, k_3, k_4}>\textbf{0}$.}
 $\newline$
 And again we can suppose that $\bf{p_{10}=0}$.
 $\newline$
  If $p_5\neq0$, then take $\tau=(\omega_5,5)$ hence we get $\bf{p_{5}=0}$. Then we can use the same arguments as in the case above with one modification: if $p_8\neq0$, then $\tau=(\omega_3,8)$ is suitable in any case (if inequality 14 turns into equality, then the inequality 20 is violated).

\paragraph{\textbf{Case} $\bf{k_1=0,k_2,k_3,k_4>0}$.}
$\newline$
See the case $k_1,k_2,k_3,k_4>0$.

\paragraph{\textbf{Case} $\bf{k_2, k_3}>\bf{0, k_1=0, k_4=0}$.}
$\newline$
We can assume $\bf{p_{10}=0}$, $\bf{p_5=0}$, and $p_2=0$ or $p_6=0$ (see the case $k_1, k_2, k_3>0, k_4=0$). If $p_6=0$, then see the case $k_1, k_2, k_3>0, k_4=0$.
If $\bf{p_2=0}$ and $p_3\neq0$, then take $\tau=(\omega_3,3)$ hence we get $\bf{p_{3}=0}$.
 $\newline$
 If $p_8\neq0$, then take $\tau=(\omega_3,8)$ hence we obtain $\bf{p_{8}=0}$. Thus we obtain that only $p_1, p_6, p_{11}$ can be nonzero. It is easy to see that $\sigma$ is representable as a sum of the essential signatures of fundamental highest weights (essential inequalities are 2, 9, 20).

\paragraph{\textbf{Case} $\bf{k_2, k_4}>\bf{0, k_1=0, k_3=0}$.}
 $\newline$
 We can assume $\bf{p_{10}=0}$, $\bf{p_5=0}$ and $p_2=0$ or $p_6=0$ (see the case $k_3=0, k_1, k_2, k_4>0$). If $p_2=0$ and $p_3\neq0$, then take $\tau=(\omega_4,3)$ hence we get $\bf{p_{3}=0}$. Next if $p_6\neq0$, then take $\tau=(\omega_4,6)$ hence  we can assume $\bf{p_{6}=0}$.
 $\newline$
 If $p_6=0$, then see the case $k_1, k_2, k_3, k_4>0$.

\paragraph{\textbf{Case} $\bf{k_1, k_2}>\bf{0, k_3=0, k_4=0}$.}
$\newline$
We have ${p_{3}=0}$ or $p_{10}=0$. We want to show that $p_{10}=0$ is the only case needed to be considered.
$\newline$
  If $p_3=0$ and $p_6\neq0$, then take $\tau=(\omega_2,6,{10})$ hence we get ${p_{6}=0}$ or $p_{10}=0$.
  If $p_3=0, p_6=0$ and $p_1\neq0$, then take $\tau=(\omega_2,1,{10})$ hence we obtain ${p_{1}=0}$ or $p_{10}=0$.
   $\newline$
   If $p_3=0, p_6=0, p_1=0$, then take $\tau=(\omega_2,{10})$ hence we get ${p_{10}=0}$. Thus in any case we can assume $\bf{p_{10}=0}$.
   $\newline$
   The arguments from the case $k_4=0, k_1, k_2, k_3>0$ show that we can suppose $\bf{p_5=0}$ and $p_2=0$ or $p_6=0$.
$\newline$
If $p_2=0$ and $p_3\neq0$, then take $\tau=(\omega_1,{3})$ hence we get $\bf{p_{3}=0}$. Thus only coordinates  $p_1, p_6, p_8, p_{11}$ can be nonzero. And $\sigma$ is representable as a sum of the essential signatures of fundamental highest weights (essential inequalities are 2, 6, 9, 43).
$\newline$
If $p_6=0$ and $p_3\neq0$, then take $\tau=(\omega_1,{3})$ hence we obtain $\bf{{p_{3}=0}}$. Thus only coordinates  $p_1, p_2, p_8, p_{11}$ can be nonzero. And $\sigma$ is representable as a sum of the essential signatures of fundamental highest weights (essential inequalities are 2, 12, 16, 43).
$\newline$
The remaining cases:
\begin{enumerate}
\item $k_1, k_3>0, k_2=k_4=0$
\item $k_1, k_4>0, k_2=k_3=0$
\item $k_3, k_4>0, k_2=k_1=0$
\item $k_2>0, k_1=k_3=k_4=0$
\item $k_1>0, k_2=k_3=k_4=0$
\item $k_3>0, k_1=k_2=k_4=0$
\item $k_4>0, k_1=k_3=k_2=0$.
\end{enumerate}
All these cases are trivial and can be partially reduced to the considered ones. For example, let us consider the case 4.
$\newline$
We can assume $\bf{p_{12}=0}, \bf{p_4=0}, \bf{p_9=0}, \bf{p_7=0}, \bf{p_{10}=0}, \bf{p_5=0}$, and $p_2=0$ or $p_6=0$ (see the case $k_1, k_2>0, k_3=0, k_4=0$). If $p_6=0$, then only the coordinates $p_1, p_2, p_3, p_8, p_{11}$ can be nonzero. Hence $\sigma$ is representable as a sum of essential signatures of fundamental highest weights (essential inequalities are 2, 16, 21).
 $\newline$
  If $p_2=0$, then only the coordinates $p_1, p_3, p_6, p_8, p_{11}$ can be nonzero. Hence $\sigma$ is representable as a sum of essential signatures of fundamental highest weights (essential inequalities are 2, 14, 16, 20).

   All the remaining cases can be considered by using similar arguments.

\end{proof}

\begin{proof}[Proof of Theorem \ref{t2}]
Let $\lambda=k_1\omega_1+k_2\omega_2+k_3\omega_3+k_4\omega_4$.
We need to show that $\dim V(\lambda)=|\Sigma^{f}(\lambda)|$ for all dominant weights $\lambda$. By (\cite[Sec. 5.4]{[T]}) there exists  a 4-variate polynomial $f(x_1,x_2,x_3,x_4)$ of total degree $\leq12$ such that
 $$
 f(k_1,k_2,k_3,k_4)= |\Sigma^{f}(\lambda)|,
 $$
 for an arbitrary dominant weight $\lambda$.
 By Weyl's dimension formula there exists another 4-variate polynomial $w(x_1,x_2,x_3,x_4)$ of degree 12 such that
 $$
 w(k_1,k_2,k_3,k_4)=\dim V(\lambda),
 $$
 for an arbitrary dominant weight $\lambda$.

 To prove that $w(x_1,x_2,x_3,x_4)=f(x_1,x_2,x_3,x_4)$ it is enough to verify this for all quadruples of non-negative integers $m_1,m_2,m_3,m_4$ such that $m_1+m_2+m_3+m_4\leq12$ (\cite[Sec. 5.4]{[T]}).
 This can be easily done by using a computer.
\end{proof}

\end{document}